\newcommand{\lar}{\leftarrow}
\newcommand{\rar}{\rightarrow}
\newcommand{\incl}{\subseteq}
\newcommand{\cont}{\supseteq}
\newcommand{\imply}\Rightarrow
\newcommand{\eqval}\Leftrightarrow
\newcommand{\txt}[1]{\mathrm{\quad #1 \quad}}
\renewcommand{\and}{\txt{and}}
\newcommand{\st}{\; | \;}
\newcommand\braket[2]{\langle\, #1 \,| \,  #2\, \rangle}
\DeclareMathOperator{\e}{e}
\DeclareMathOperator{\Img}{Im}
\DeclareMathOperator{\Ker}{Ker}
\newcommand{\Part}{\mathcal{P}}
\newcommand{\R}{\mathbb{R}}
\newcommand{\Z}{\mathbb{Z}}
\newcommand{\Set}{\mathbf{Set}}
\newcommand{\Top}{\mathbf{Top}}
\newcommand{\Prob}{\mathrm{Prob}}
\DeclareFontFamily{OT1}{pzc}{}
\DeclareFontShape{OT1}{pzc}{m}{it}{ <-> s * [1.1] pzcmi7t}{}
\DeclareMathAlphabet{\mathpzc}{OT1}{pzc}{m}{it}
{

\newcommand{\ph}{\varphi}
\renewcommand{\aa}{\alpha}
\newcommand{\bb}{\beta}
\newcommand{\cc}{\gamma}

\renewcommand{\ee}{\varepsilon}

\newcommand{\Om}{\Omega}
\newcommand{\LL}{\Lambda}




\newcommand{\DF}{\mathpzc{D}}


\newcommand{\Fh}{\mathbb{F}}






\begin{document}

\title{Belief Propagation as Diffusion}
\author{Olivier Peltre\\[0.4em]
olivier.peltre@univ-artois.fr\\[0.4em]
Université d'Artois, Faculté Jean Perrin (LML)\\[0.4em]
Rue Jean Souvraz 62307 LENS CEDEX\\[0.4em]
}
\date{2021}

\maketitle

\begin{abstract} 
  We introduce novel belief propagation algorithms to estimate 
  the marginals of a high dimensional probability distribution. 
  They involve natural (co)homological constructions relevant for 
  a localised description of statistical systems.
\end{abstract} 

\section*{Introduction}

Message-passing algorithms such as belief propagation (BP) are
parallel computing schemes that try to estimate the marginals of a 
high dimensional probability distribution. 
They are used in various areas involving the statistics 
of a large number of interacting random variables,
such as computational thermodynamics \cite{Kikuchi-51,Mezard-Montanari}, 
artificial intelligence \cite{Pearl-82,Yedidia-2005,Ping-2017}, computer vision \cite{Sun-2003}
and communications processing \cite{Gallager-63,Jego-2009}. 

We have shown the existence of a non-linear 
correspondence between BP algorithms 
and discrete integrators of a new form of continuous-time 
diffusion equations on belief networks \cite{gsi19,phd}. 
Practical contributions include
(a) regularised BP algorithms for any time step 
or {\it diffusivity}\footnote{
  This coefficient $\ee$ would appear as an exponent of messages 
  in the usual multiplicative writing of BP equations. 
  Diffusivity relates energy density gradients to heat fluxes in physics, 
  as in $\vec {\ph} = - \ee \cdot \vec\nabla(u)$. 
}
coefficient $0 < \ee < 1$, 
and (b) a canonical {\it Bethe diffusion flux} 
that regularises GBP messages
by new Möbius inversion formulas in degree 1\footnote{
  Generalised belief propagation = BP on hypergraphs, 
  see \cite{Yedidia-2005} for the algorithm. Our algorithm 2
  exponentiates their messages $m_{\aa\bb}$ 
  by the coefficients $c_\aa \in \Z$ appearing in the 
  Bethe-Kikuchi local approximation of free energy. 
}.

The purpose of this text is to describe  
the structure of belief networks as concisely as possible, 
with the geometric operations that appear in 
our rewriting of BP equations. An open-sourced python implementation,
hosted on github at \href{https://github.com/opeltre/topos}{opeltre/topos}
was also used to conduct benchmarks 
showing the importance of chosing $\ee < 1$. 

In the following, we denote by: 
\begin{itemize}
\iii $\Om = \{i, j, k, \dots \}$ a finite set of indices
(e.g. atoms, neurons, pixels, bits ...)
\iii $x_i$ the microstate of atom $i$, valued in a finite set $E_i$
\iii $x_\Om$ the microstate of the global system, valued in 
$E_\Om = \prod_{i \in \Om} E_i$ 
\end{itemize}
The statistical state of the system is described by a 
probability distribution $p_\Om$ on $E_\Om$.
We write $\Delta_\Om = \Prob(E_\Om)$ for the convex space
of statistical states. 

\section{Graphical Models} 

\begin{definition} 
A {\em hypergraph} $(\Om, K)$ is a set of {\em vertices} 
$\Om$ and a set of {\em faces}\footnote{
  Also called hyperedges, or regions. 
  A {\it graph} is a hypergraph with only hyperedges of cardinality 2.
  A {\it simplicial complex} 
  is a hypergraph such that any subset of a face is also a face. 
  A {\it lattice} is a hypergraph closed 
  under $\cap$ and $\cup$. 
  We shall mostly 
  be interested in {\it semi-lattices},
  closed only under intersection, of which simplicial complexes are 
  a special case. 
}
$K \incl \Part(\Om)$. 
\end{definition} 

Let us denote by $x_\aa$ the microstate of a face $\aa \incl \Om$, 
valued in $E_\aa = \prod_{i \in \aa} E_\aa$. For 
every $\bb \incl \aa$ in $\Part(\Om)$, we have a canonical projection or 
{\it restriction}\footnote{
  The contravariant 
  functor $E : \Part(\Om)^{op} \to \Set$ of microstates defines 
  a sheaf of sets over $\Om$. 
} map:
\[ \pi^{\bb\aa} : E_\aa \to E_\bb \] 
We simply write $x_\bb$ 
for the restriction of $x_\aa$ 
to a subface $\bb$ of $\aa$. 

\begin{definition} \label{def-GM}
A {\em graphical model} $p_\Om \in \Delta_\Om$
on the hypergraph $(\Om, K)$ is a 
positive probability distribution on $E_\Om$ that factorises 
as a product of positive local factors over faces: 
\[ p_\Om(x_\Om) = \frac 1 {Z_\Om} \prod_{\aa \in K} f_\aa(x_\aa) 
  = \frac 1 {Z_\Om} \e^{- \sum_\aa h_\aa(x_\aa)}  \]
We denote by $\Delta_K \incl \Delta_\Om$ the subspace 
of graphical models on $(\Om, K)$.
\end{definition}

\begin{figure}[b]
\begin{center}
\begin{minipage}{0.8\textwidth}
  \begin{center}
  \img{0.9}{GM.pdf}
  \vspace{0.4em}
  \end{center}
  \small
  {\bf Fig 1.} 
    Graphical model 
    $p_{ijkl}(x_{ijkl}) 
    = f_{ijk}(x_{ijk}) \cdot f_{ikl}(x_{ikl}) \cdot f_{jkl}(x_{jkl})$
    with its factor graph representation (middle)  
    on a simplicial complex $K$ formed by joining 3 triangles at 
    a common vertex and called 2-{\it horn} $\LL^2$ 
    of the 3-simplex (left). 
    The situation is equivalent when $K$ is a three-fold 
    covering of $\Om$ by 
    intersecting regions $\aa, \aa', \aa''$ (right).
\end{minipage}
\end{center}
\end{figure}

A graphical model $p_\Om$ for $(\Om, K)$ is also called
{\it Gibbs state} of the associated energy function or
hamiltonian $H_\Om : E_\Om \to \R$: 
\[ H_\Om(x_\Om) = \sum_{\aa \in K} h_\aa(x_\aa) \]
The normalisation factor of the Gibbs density  $\e^{-H_\Om}$ 
is computed by the partition function 
$Z_\Om 
= \sum_{x_\Om} \e^{- H_\Om(x_\Om)}$. 
The free energy 
$F_\Om = - \ln Z_\Om$ and partition function 
generate most relevant statistical quantities in their 
derivatives\footnote{
  Letting $\mu_H$ denote the image by $H$ 
  of the counting measure on microstates,  
  $Z^\theta_\Om 
  = \int_{\lambda \in \R} \e^{- \theta \lambda} \mu_H(d\lambda)$ 
  is the Laplace transform 
  of $\mu_H$ with respect to inverse temperature 
  $\theta = 1 / {k_B T}$. 
  In \cite{phd} we more generally consider free energy as 
  a functional $A_\Om \to \R$ whose differential at $H_\Om \in A_\Om$ 
  is the Gibbs state $p_\Om \in A_\Om^*$. 
}. 
They are however not computable in practice, the sum over 
microstates scaling exponentially in the number of atoms. 

Message-passing algorithms rely on local structures 
induced by $K$ to estimate marginals, providing with an 
efficient alternative \cite{Kikuchi-51,Mezard-Montanari} 
to Markov Chain Monte Carlo methods such as Hinton's contrastive
divergence algorithm commonly used for training 
restricted Boltzmann machines \cite{Hinton-2009,Ping-2017}. 
They are also related to local variational principles 
involved in the estimation of
$F_\Om$ \cite{Yedidia-2005,gsi19,phd} 
by Bethe approximation \cite{Bethe-35,Kikuchi-51}. 

We showed in \cite{phd} that message-passing explores
a subspace of potentials $(u_\aa)$ 
related to equivalent factorisations of $p_\Om$,  until an associated 
collection of local probabilities $(q_\aa)$ is consistent. 
Two fundamental operations constraining this  
non-linear correspondence are introduced below. 
They consist of a differential $d$ 
associated to a {\it consistency} constraint, 
and its adjoint boundary $\delta = d^*$ enforcing  
a dual {\it energy conservation} constraint. 
These operators relate graphical models to a 
statistical (co)homology theory, in addition to 
generating the BP equations.

\section{Marginal Consistency}

In the following, we suppose given a hypergraph $(\Om, K)$ 
closed under intersection: 
\[ \aa \cap \bb \in K \quad\txt{for\;all}\quad  
 \aa, \bb \in K \]
We denote by $\Delta_\aa$
the space of probability distributions on $E_\aa$ for all $\aa \in K$. 
Given a graphical model $p_\Om \in \Delta_\Om$, 
the purpose of belief propagation algorithms is 
to efficiently approximate the collection of true marginals 
$p_\aa \in \Delta_\aa$ for $\aa \in K$ 
by local beliefs $q_\aa \in \Delta_\aa$, 
in a space $\Delta_0$ of dimension typically much smaller than $\Delta_\Om$. 

\begin{definition} 
We call {\em belief} over $(\Om, K)$ a collection $q \in \Delta_0$
of local probabilities over faces, where: 
  \[ \Delta_0 = \prod_{\aa \in K} \Delta_\aa \] 
\end{definition} 

\begin{definition} 
For every $\bb \incl \aa$ the {\em marginal} or 
partial integration map 
$\Sigma^{\bb\aa} : \Delta_\aa \to \Delta_\bb$ is defined by: 
  \[ \Sigma^{\bb\aa} q_\aa (x_\bb) 
  = \sum_{y \in E_{\aa\setminus\bb}} q_\aa(x_\bb, y) 
  \]
\end{definition}

\begin{definition}
{\em Consistent} beliefs span the convex subset 
$\Gamma \incl \Delta_0$ defined by marginal consistency 
constraints\footnote{
  Equivalently, $\Gamma$ is the projective limit 
  of the functor $\Delta : K^{op} \to \Top$ defined by local 
  probabilities and marginal projections, or 
  space of global sections of the sheaf
  of topological spaces $\Delta$ over $(\Om, K)$.  
}: 
\[ q_\bb = \Sigma^{\bb\aa}(q_\aa) \quad\txt{for\;all}\; \bb \incl \aa \]
\end{definition}

The true marginals $(p_\aa) \in \Delta_0$ of 
a global density $p_\Om \in \Delta_\Om$ 
are {\it always consistent}. 
However their symbolic definition
$p_\aa = \Sigma^{\aa\Om} p_\Om$ 
involves a sum over fibers of $E_{\Om \setminus \aa}$,
not tractable 
in practice. 
Message-passing algorithms instead explore a parameterised family
of beliefs $q \in \Delta_0$ until meeting the consistency constraint 
surface $\Gamma \incl \Delta_0$.

Let us denote by $A_\aa^*$ the space of linear measures on $E_\aa$ 
for all $\aa \incl \Om$, and by: 
\[ \Sigma^{\bb\aa} : A^*_\aa \to A^*_\bb \]
the partial integration map.

\begin{definition} 
  We call $n$-{\em density} over $(\Om, K)$ an element 
  $\lambda \in A_n^*$ of local measures indexed by ordered 
  chains of faces, where:
  \[ A_n^* = \prod_{\aa_0 \supset \dots \supset \aa_n} A_{\aa_n}^* \]
\end{definition}

The marginal consistency constraints are expressed by a  
differential operator\footnote{
  Cohomology sequences of this kind were considered by Grothendieck and 
  Verdier \cite{SGA-4-V}, see also \cite{Moerdijk}. 
} 
$d$ on the graded vector space
$A_\bullet^* = \prod_n A_n^*$ of densities over $(\Om, K)$:
\[ \bcd 
A_0^* \rar{d} & A_1^* \rar{d} & \dots \rar{d} & A_n^* 
\ecd \]

\begin{definition} 
  The {\em differential} $d : A_0^* \to A_1^*$ acts on a density 
  $(\lambda_\aa) \in A_0^*$ by: 
  \[ d(\lambda)_{\aa\bb} = \lambda_\bb - \Sigma^{\bb\aa} \lambda_\aa \]
  {\em Consistent} densities $\lambda \in [A_0^*]$ 
  satisfy $d \lambda = 0$, and called $0$-cocycles. 
\end{definition}

The space of consistent beliefs $\Gamma \incl [A_0^*]$ 
is the intersection of $\Ker(d)$ 
with $\Delta_0 \incl A_0^*$. 
True marginals define a convex map $\Delta_\Om \to \Gamma$,
restriction\footnote{
  Note the image of $\Delta_\Om$ inside $\Gamma$ can be 
  a strict convex polytope of $\Gamma$, 
  and consistent $q \in \Gamma$ 
  do not always admit a positive preimage $q_\Om \in \Delta_\Om$ 
  \cite{Vorobev-62,Abramsky-2011}. 
}
of a linear surjection $A^*_\Om \to [A_0^*]$.
Consistent beliefs $q \in \Gamma$ acting as 
for global distributions $p_\Om \in \Delta_\Om$, 
marginal diffusion iterates over a smooth
subspace of $\Delta_0$, diffeomorphic to equivalent parameterisations 
of a graphical model $p_\Om$, until eventually reaching $\Gamma$. 

\section{Energy Conservation} 

Graphical models parameterise 
a low dimensional subspace of $\Delta_\Om$, 
but definition \ref{def-GM} is not 
injective in the local factors $f_\aa$ or local potentials 
$u_\aa = - \ln f_\aa$. 
The fibers of this parameterisation can be described linearly 
at the level of potentials, 
and correspond to homology classes of the codifferential operator 
$\delta = d^*$. 

We denote by $A_\aa$ the algebra of real functions on $E_\aa$ 
for all $\aa \incl \Om$, and by: 
\[ j_{\aa\bb} : A_\aa \to A_\bb \]
the natural extension\footnote{
  Functions on $E_\bb = \prod_{j \in \bb} E_j$ 
  can be viewed as functions on $E_\aa = \prod_{i \in \aa}$ 
  that do not depend on the state of $x_i$ for 
  $i \in \aa \setminus \bb$. Therefore $A_\bb$ 
  is essentially a subspace of $A_\aa$ 
  and $j_{\aa\bb}$ an inclusion.
} of functions pulled from $E_\bb$ to $E_\aa$ by
the restriction $x_\aa \mapsto x_\bb$. 

\begin{definition} 
  We let $\delta = d^*$ denote the adjoint of $d$, defined by duality:
  \[ \bcd
  A_0 & A_1 \lar[swap]{\delta} & \dots \lar[swap]{\delta} &
  A_n \lar[swap]{\delta} \ecd \]
\end{definition}

\begin{proposition} The {\em divergence} $\delta: A_1 \to A_0$ 
dual of $d : A_0^* \to A_1^*$, acts on $\ph \in A_1$ by:
  \[ \delta(\ph)_\bb = \sum_{\aa \cont \bb} \ph_{\aa\bb} 
  - \sum_{\cc \incl \bb} j_{\bb\cc} \ph_{\bb\cc} 
  \] 
\end{proposition}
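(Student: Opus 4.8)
The plan is to derive the formula purely from the defining adjunction $\delta = d^*$, as a finite-dimensional duality computation. The only structural input needed beforehand is the compatibility between the two families of transfer maps: for $\bb \incl \aa$, the extension $j_{\aa\bb}$ on functions (sending $f \in A_\bb$ to the function $x_\aa \mapsto f(x_\bb)$ on $E_\aa$) and the marginalisation $\Sigma^{\bb\aa}$ on measures are mutually adjoint for the canonical pairings $\langle g, \mu \rangle = \sum_{x_\aa} g(x_\aa)\,\mu(x_\aa)$ between $A_\aa$ and $A_\aa^*$. First I would verify this by unfolding the fibrewise sum: for $f \in A_\bb$ and $\mu \in A_\aa^*$,
\[
\langle j_{\aa\bb} f, \mu \rangle_{A_\aa}
= \sum_{x_\aa} f(x_\bb)\,\mu(x_\aa)
= \sum_{x_\bb} f(x_\bb) \sum_{y \in E_{\aa\setminus\bb}} \mu(x_\bb, y)
= \langle f, \Sigma^{\bb\aa}\mu \rangle_{A_\bb},
\]
so that $\Sigma^{\bb\aa} = (j_{\aa\bb})^*$. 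This single identity is the only nontrivial ingredient; everything else is bookkeeping.

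Next I would expand the defining relation $\langle \delta\ph, \lambda\rangle_0 = \langle \ph, d\lambda\rangle_1$, which holds for every $\lambda \in A_0^*$, using the graded pairing $\langle \ph, \lambda\rangle_1 = \sum_{\aa \contst \bb} \langle \ph_{\aa\bb}, \lambda_{\aa\bb}\rangle_{A_\bb}$ on the strictly nested $1$-chains and the definition $d(\lambda)_{\aa\bb} = \lambda_\bb - \Sigma^{\bb\aa}\lambda_\aa$. Substituting gives
\[
\langle \ph, d\lambda\rangle_1
= \sum_{\aa \contst \bb} \langle \ph_{\aa\bb}, \lambda_\bb\rangle_{A_\bb}
- \sum_{\aa \contst \bb} \langle \ph_{\aa\bb}, \Sigma^{\bb\aa}\lambda_\aa\rangle_{A_\bb},
\]
and I would apply the adjunction to each term of the second sum, rewriting it as $\langle j_{\aa\bb}\ph_{\aa\bb}, \lambda_\aa\rangle_{A_\aa}$, thereby transferring the map onto the function side and onto the larger face $\aa$.

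The decisive step is then reindexing the two double sums so that each is grouped by a single face carrying a free argument $\lambda_\bb$. In the first sum I collect over the lower index, obtaining $\sum_\bb \langle \sum_{\aa \contst \bb}\ph_{\aa\bb}, \lambda_\bb\rangle_{A_\bb}$; in the second I collect over the upper index and relabel the ordered pair $(\aa,\bb) \mapsto (\bb,\cc)$, obtaining $\sum_\bb \langle \sum_{\cc \inclst \bb} j_{\bb\cc}\ph_{\bb\cc}, \lambda_\bb\rangle_{A_\bb}$. Matching against $\langle \delta\ph, \lambda\rangle_0 = \sum_\bb \langle (\delta\ph)_\bb, \lambda_\bb\rangle_{A_\bb}$ and using nondegeneracy of the pairing (the components $\lambda_\bb$ range freely over $A_\bb^*$) then yields $\delta(\ph)_\bb = \sum_{\aa \cont \bb} \ph_{\aa\bb} - \sum_{\cc \incl \bb} j_{\bb\cc}\ph_{\bb\cc}$, as claimed. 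The part demanding the most care is precisely this relabelling: one must check that grouping the incoming contributions $\aa \contst \bb$ at $\bb$ and the outgoing extended contributions $\cc \inclst \bb$ to $\bb$ accounts for every ordered pair of strictly nested faces exactly once and with the correct sign. No convergence or positivity issue intervenes, since all spaces are finite-dimensional and both $d$ and $\delta$ are linear.
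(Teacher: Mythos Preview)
Your proof is correct and follows essentially the same route as the paper: both arguments hinge on the adjunction $\Sigma^{\bb\aa} = (j_{\aa\bb})^*$ and on reindexing the double sum over ordered pairs $\aa \supset \bb$ so that contributions are grouped at a single face. The only cosmetic difference is that the paper plugs the claimed formula for $\delta(\ph)_\bb$ into $\braket{\lambda}{\delta\ph}$ and verifies it equals $\braket{d\lambda}{\ph}$, whereas you start from $\braket{\ph}{d\lambda}$ and \emph{derive} the formula by nondegeneracy; the underlying computation is identical.
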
 

\begin{proof} 
  Let $\lambda \in A_0^*$ and $\ph \in A_1$.
  The duality bracket $A_0^* \otimes A_0 \to \R$ 
  is naturally defined by sum 
  of local duality brackets $A_\bb^* \otimes A_\bb \to \R$, 
  which correspond to integration of local measures against observables: 
  \[ \braket{\lambda}{\delta \ph} = 
  \sum_{\bb \in K} \braket{\lambda_\bb}{\delta \ph_\bb} 
  = \sum_{\bb \in K} \sum_{x_\bb \in E_\bb} 
  \lambda_\bb(x_\bb) \delta\ph_\bb(x_\bb) \]
  Substituting with the expression of $\delta \ph$ we get\footnote{
    In this substitution, we simply wrote $\ph_{\bb\cc}(x_\cc)$ for 
    $j_{\bb\cc}(\ph_{\bb\cc})(x_\bb)$, as $j_{\bb\cc} : A_\cc \to A_\bb$ 
    is an inclusion. 
  }: 
  \[ \begin{split}
    \braket{\lambda}{\delta \ph} 
  &= \sum_{\bb \in K} \: \sum_{x_\bb \in E_\bb} 
    \lambda_\bb(x_\bb) \Big( 
    \sum_{\aa \cont \bb} \ph _{\aa\bb}(x_\bb) - 
    \sum_{\cc \incl \bb} \ph_{\bb\cc}(x_{\cc}) \Big) \\[0.8em]
  &= \sum_{\aa \cont \bb} \: \sum_{x_\bb \in E_\bb} 
    \ph_{\aa\bb}(x_\bb) \lambda_\bb(x_\bb)
    - \sum_{\bb \cont \cc} \: \sum_{x_\cc \in E_\cc} 
    \ph_{\bb\cc}(x_\cc) 
    \sum_{y \in E_{\bb \setminus \cc}} \lambda_\bb(x_\cc, y) 
  \end{split} \] 
  The factorisation of the rightmost sum by $\ph_{\bb\cc}(x_\cc)$ reflects 
  the duality of $\Sigma^{\bb\aa}$ with $j_{\bb\cc}$. 
  Relabeling summation indices $\bb \cont \cc$ as $\aa \cont \bb$, 
  we finally get: 
  \[ \sum_{\aa \cont \bb} \braket{\lambda_\bb}{\ph_{\aa\bb}} 
    - \sum_{\bb \cont \cc} 
      \braket{\Sigma^{\cc\bb} \lambda_\bb}{\ph_{\bb\cc}} 
    = \sum_{\aa \cont \bb} 
    \braket{\lambda_\bb - \Sigma^{\bb\aa}\lambda_\aa}{\ph_{\aa\bb}} \\
  \]
  So that $\braket{\lambda}{\delta \ph} = \braket{d\lambda}{\ph}$ 
  for all $\lambda \in A_0^*$ and all $\ph \in A_1$. 
\end{proof} 

  Consider the total energy map 
  $\zeta_\Om : A_0 \to A_\Om$ defined by: 
  \[ \zeta_\Om(u) = \sum_{\aa \in K} u_\aa \]
  We have left injections $j_{\Om\aa}$ implicit, 
  viewing each $A_\aa \incl A_\Om$ as a subalgebra of $A_\Om$.
  Denoting by $A_K \incl A_\Om$ the image of $\zeta_\Om$, 
  a graphical model $p_\Om \in \Delta_K$ 
  is then associated to $u \in A_0$ by normalising the 
  Gibbs density $\e^{-\zeta_\Om(u)}$, as in \ref{def-GM}.

  \begin{theorem} \label{conservation}
    For all $u, u' \in A_0$ the following are equivalent 
    \cite[Chapter~5]{phd}: 
    \bi
    \iii conservation of total energy 
    $\sum_\aa u'_\aa = \sum_\aa u_\aa$ in $A_\Om$,
    \iii there exists $\ph \in A_1$ 
    such that $u' = u + \delta \ph$ in $A_0$.
    \ei 
  \end{theorem}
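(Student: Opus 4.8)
The plan is to recast the statement as the single exactness identity $\Ker(\zeta_\Om) = \Img(\delta)$ at $A_0$. Setting $v = u' - u$, conservation of total energy reads $\zeta_\Om(v) = \sum_\aa v_\aa = 0$ in $A_\Om$, while the existence of $\ph$ with $u' = u + \delta\ph$ reads $v \in \Img(\delta)$; since $\zeta_\Om$ and $\delta$ are linear it suffices to show these two subspaces of $A_0$ coincide. The inclusion $\Img(\delta)\incl\Ker(\zeta_\Om)$ (i.e. $\zeta_\Om\circ\delta = 0$, which gives the implication from the second bullet to the first) I would dispatch immediately: summing the divergence formula over all faces gives $\zeta_\Om(\delta\ph) = \sum_\bb\sum_{\aa\cont\bb}\ph_{\aa\bb} - \sum_\bb\sum_{\cc\incl\bb} j_{\bb\cc}\ph_{\bb\cc}$, and both double sums range over the same set of ordered pairs of nested faces (relabel $(\bb,\cc)$ as $(\aa,\bb)$), so each $\ph_{\aa\bb}$ appears once with each sign in $A_\Om$ and they cancel.

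The substance is the reverse inclusion $\Ker(\zeta_\Om)\incl\Img(\delta)$, and the tool I would isolate first is a marginalization lemma. Fixing linear forms $\epsilon_i : A_i \to \R$ with $\epsilon_i(1) = 1$, let $\Pi_\aa : A_\Om \to A_\aa$ average out the coordinates outside $\aa$. Because $E_\aa = \prod_{i\in\aa} E_i$ is a product and $K$ is closed under intersection, $\Pi_\aa$ restricts on each $A_\cc$ to marginalization onto $A_{\aa\cap\cc}$, with the target index $\aa\cap\cc$ genuinely lying in $K$; and $\Pi_\aa$ is the identity on $A_\aa$. This is the one point where the semi-lattice hypothesis is used.

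For the reverse inclusion I would induct on $|K|$. Pick a maximal face $\aa\in K$; then $K' = K\setminus\{\aa\}$ is still closed under intersection, since a maximal face equals an intersection $\cc\cap\cc'$ only trivially (any $\cc\cont\aa$ forces $\cc=\aa$). Given $v\in\Ker(\zeta_\Om)$, applying $\Pi_\aa$ to $\sum_\cc v_\cc = 0$ yields $v_\aa = -\sum_{\cc\neq\aa}w_\cc$ with $w_\cc = \Pi_\aa v_\cc \in A_{\aa\cap\cc}$ and $\aa\cap\cc\subsetneq\aa$, so $v_\aa$ is a sum of contributions supported on proper subfaces of $\aa$ that lie in $K'$. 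Defining $\ph^{(1)}\in A_1$ by $\ph^{(1)}_{\aa\bb} = \sum_{\cc:\,\aa\cap\cc=\bb} w_\cc$ (and zero on all other chains), the divergence formula gives $(\delta\ph^{(1)})_\aa = v_\aa$ while every other component of $\delta\ph^{(1)}$ sits on faces of $K'$. Hence $v - \delta\ph^{(1)}$ lies in $\Ker(\zeta_\Om)$, vanishes on $\aa$, and is supported on $K'$; by the inductive hypothesis it equals $\delta\ph^{(2)}$ for some $\ph^{(2)}$ supported on chains of $K'$, whence $v = \delta(\ph^{(1)}+\ph^{(2)})$.

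The main obstacle is precisely the marginalization lemma: everything hinges on the transfers $w_\cc$ landing on faces $\aa\cap\cc$ that genuinely belong to $K$, so that they can serve as indices of a density $\ph\in A_1$. Without closure under intersection the marginals $\Pi_\aa v_\cc$ would live on subsets outside $K$ and could not be reabsorbed as a divergence. The remaining bookkeeping — that $\delta$ computed over $K$ agrees with $\delta$ computed over $K'$ on densities supported on chains of $K'$ (in particular that $\delta\ph^{(2)}$ has no $\aa$-component, which holds because $\aa$ is maximal and absent from $K'$), and that peeling off a maximal face preserves intersection-closedness — is routine once the lemma is in place.
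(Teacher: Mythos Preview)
The paper itself does not prove Theorem~\ref{conservation}: it only states the result and cites Chapter~5 of the author's thesis, so there is no in-paper argument to compare against. Judged on its own, your proof is correct.

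Your reduction to $\Ker(\zeta_\Om)=\Img(\delta)$ is the right reformulation, and the easy direction $\zeta_\Om\circ\delta=0$ is exactly the telescoping you describe (after pushing everything to $A_\Om$ via $j_{\Om\bb}$, each $\ph_{\aa\bb}$ appears once with each sign). For the hard direction, your induction on $|K|$ by peeling off a maximal face is clean: maximality guarantees $K'=K\setminus\{\aa\}$ is again $\cap$-closed, and that $\aa\cap\cc\subsetneq\aa$ for $\cc\neq\aa$; the averaging projector $\Pi_\aa$ lands $v_\cc$ in $A_{\aa\cap\cc}$ with $\aa\cap\cc\in K$ precisely because of the semi-lattice hypothesis; and the flux $\ph^{(1)}$ you build is supported on edges $\aa\supsetneq\bb$, so its divergence has $\aa$-component $-\sum_{\bb\subsetneq\aa}\ph^{(1)}_{\aa\bb}=v_\aa$ and all other components sit on $K'$. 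The compatibility of $\delta$ on $K$ and $K'$ for fluxes supported on $K'$ holds because $\aa$ is maximal (no incoming edges from above, and $\aa$ never appears as the source of a $K'$-chain), which you note. The base case $|K|\leq 1$ is trivial.

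One stylistic remark: the approach signposted by the surrounding text (the M\"obius inversion $\mu=\zeta^{-1}$ and the interaction decomposition of $A_\Om$) suggests the thesis proof may proceed by decomposing $A_0$ via $\zeta$ rather than by induction on faces, but your argument is self-contained, elementary, and makes the role of the $\cap$-closure hypothesis completely transparent.
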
 
  
  Theorem \ref{conservation} states that $\Ker(\zeta_\Om)$ coincides
  with the image of the divergence $\delta A_1 \incl A_0$.
  The subspace of total energies 
  $\Img(\zeta_\Om) \simeq A_0 / \Ker(\zeta_\Om)$ 
  is therefore isomorphic to 
  the quotient $[A_0] = A_0 / \delta A_1$, 
  formed by homology classes of potentials 
  $[u] = u + \delta A_1 \incl A_0$.
  Global observables 
  of $A_K \incl A_\Om$ can thus be represented 
  by equivalence classes of local potentials in $[A_0]$, 
  homology under $\delta$ giving a local characterisation 
  for the fibers of $\zeta_\Om$.  

\section{Diffusions}

The local approach to the marginal estimation problem,
given $p_\Om = \frac 1 {Z_\Om} \e^{-H_\Om}$, 
consists of using a low dimensional map 
$A_0 \to \Delta_0$ as substitute for the high dimensional parameterisation 
$A_\Om \to \Delta_\Om$, until parameters $u \in A_0$ 
define a consistent belief $q \in \Gamma$ whose 
components $q_\aa \in \Delta_\aa$ estimate the 
true marginals $p_\aa$ of $p_\Om$. 

\[ \bcd  
\Delta_\Om \rar & \Gamma \rar[hook] & \Delta_0 \\ 
A_\Om \uar
  & \left[ A_0 \right] \uar[swap, dashed] \lar[hook] 
  & A_0 \lar[two heads] \uar[swap]
\ecd \] 

Assume the hamiltonian is defined by $H_\Om = \sum_\aa h_\aa$ 
for given $h \in A_0$. According to theorem \ref{conservation}, 
parameters $u \in A_0$ 
will define the same total energy if and only if: 
\[ u = h + \delta \ph \] 
for some heat flux $\ph \in \delta A_1$. The energy conservation 
constraint $[u] = [h]$ therefore restricts parameters
to fibers of the bottom-right arrow in the above diagram. 
The rightmost arrow $A_0 \to \Delta_0$ is given 
by the equations: 
\begin{equation} \label{eq-beliefs}
q_\aa = \frac 1 {Z_\aa} \e^{-U_\aa}
\quad\txt{where}\quad U_\aa = \sum_{\bb \incl \aa} u_\bb 
\end{equation}
The image of $[h]$ in $\Delta_0$ is a smooth non-linear manifold of 
$\Delta_0 \incl A_0^*$, which may intersect the convex polytope 
$\Gamma = \Ker(d) \cap \Delta_0$ of consistent beliefs 
an unknown number of times. Such consistent beliefs 
in $\Gamma \incl \Delta_0$
are the fixed points of belief propagation algorithms. 
The central dashed vertical arrow therefore represents what they 
try to compute, although no privileged $q \in \Gamma$ may be defined from 
$[h] \in A_0$ in general. 

\begin{definition} Given a {\em flux functional} 
  $\Phi : A_0 \to A_1$, we call 
{\em diffusion} associated to $\Phi$
the vector field $\delta \Phi$ on 
$A_0$ defined by: 
\begin{equation} \label{eq-diffusion}
  \frac{du}{dt} = \delta \Phi(u) 
\end{equation} 
Letting $q \in \Delta_0$ be defined by (\ref{eq-beliefs}), we say that 
$\Phi$ is {\em consistent} if $q \in \Gamma \imply \Phi(u) = 0$, and that
$\Phi$ is {\em faithful} if it is consistent
and $\Phi(u) = 0 \imply q \in \Gamma$. 
\end{definition} 

Consistent flux functionals $\Phi$ are constructed 
by composition with two remarkable operators $\zeta : A_0 \to A_0$, 
mapping potentials to local hamiltonians $u \mapsto U$, 
and $\DF : A_0 \to A_1$,  
a non-linear analog of the differential $d : A_0^* \to A_1^*$, 
measuring inconsistency of the local beliefs defined by 
$U \mapsto q$ in (\ref{eq-beliefs}). 
The definition of $\DF$ involves a conditional form 
of free energy $\Fh^{\bb\aa}: A_\aa \to A_\bb$, 
which generates conditional expectation maps
with respect to local beliefs by differentiation\footnote{
  The tangent map of $\DF$ 
  in turn yields differential operators 
  $\nabla_q : A_0 \to A_1 \to \dots$ for all $q \in \Gamma$,
  whose kernels characterise tangent fibers ${\rm T}_q \Gamma$ 
  pulled by the non-linear parameterisation (\ref{eq-beliefs}), 
  see \cite[Chapter~6]{phd}.
}.

\begin{definition} We call {\em effective energy} 
  the smooth map $\Fh^{\bb\aa} : A_\aa \to A_\bb$ defined by: 
  \[ \Fh^{\bb\aa}(U_\aa \st x_\bb) = 
  - \ln \sum_{y \in E_{\aa \setminus \bb}} \e^{-U_\aa(x_\bb, y)} 
  \]
  and {\em effective energy gradient} the smooth map 
  $\DF : A_0 \to A_1$ defined by: 
  \[ \DF(U)_{\aa\bb}
  = U_\bb - \Fh^{\bb\aa}(U_\aa) \]
\end{definition} 

Letting $q = \e^{-U}$ denote local Gibbs densities, 
note that $q \in \Gamma \eqval \DF(U) = 0$ by:
\[ \DF(U)_{\aa\bb} =
\ln \bigg[\: \frac {\Sigma^{\bb\aa} q_\aa} {q_\bb} \:\bigg]
\]
The map $u \mapsto U$ 
is a fundamental automorphism $\zeta$ of $A_0$, 
inherited from the partial order structure of $K$. 
Möbius inversion formulas define its inverse $\mu = \zeta^{-1}$
\cite{Rota-64,Leinster-08,phd}. We have extended $\zeta$ and $\mu$ 
to automorphisms on the full complex $A_\bullet$ in 
\cite[Chapter~3]{phd}, in particular, $\zeta$ and $\mu$  
also act naturally on $A_1$. 

\begin{definition} The {\em zeta} transform
  $\zeta : A_0 \to A_0$ is defined by: 
  \[ \zeta(u)_\aa = \sum_{\bb \incl \aa} u_\bb \]
\end{definition} 

The flux functional $\Phi = - \DF \circ \zeta$ is consistent 
and faithful \cite{phd}, meaning that $\delta \Phi$
is stationary on $u \in A_0$ if and only if associated beliefs 
$q \in \Delta_0$ are consistent. This flux functional
yields the GBP equations of algorithm A 
(up to the normalisation step of line 3, 
ensuring normalisation of beliefs). 
It may however not be optimal. 

We propose another flux functional $\phi = - \mu \circ \DF \circ \zeta$ 
by degree-1 Möbius inversion on heat fluxes in algorithm B. 
It is remarkable that the associated diffusion $\delta \phi$ 
involves only the coefficients $c_\aa \in \Z$ 
originally used by Bethe \cite{Bethe-35} 
to estimate the free energy 
of statistical systems close to their critical temperature. 
These coefficients also appear in the 
cluster variational problem 
\cite{Kikuchi-51,Morita-57,Pelizzola-2005}
on free energy, solved by fixed points of 
belief propagation and diffusion algorithms \cite{phd,Yedidia-2005}. 

It remains open whether fixed points of Bethe diffusion 
are always consistent. 
We were only able to prove this in a neighbourhood of 
the consistent manifold, a property we called local faithfulness 
of the Bethe diffusion flux $\phi$, see \cite[Chapter~5]{phd}. 
Faithfulness proofs are non-trivial and
we conjecture the global faithfulness of $\phi$. 

\begin{definition} 
  The {\em Bethe numbers} $(c_\aa) \in \Z^K$
  are uniquely defined by the equations:
  \[ \sum_{\aa \cont \bb} c_\aa = 1 
  \txt{for\:\;all} \bb \in K \]
\end{definition} 

  \newcommand{\hr}[1]{\noindent\rule{\textwidth}{#1}}

\begin{figure}[t]
\hr{0.4mm} 

  \noindent {\small {\bf Algorithms.} 
GBP and Bethe Diffusions\footnotemark
  .}\\[-0.5em]
\hrule 
\vspace{-0.5em}
\begin{multicols}{2}
  \begin{figure}[H] \begin{minipage}{0.35\textwidth}
    \smaller
    \begin{tabular*}{\linewidth}
    {@{\extracolsep{\fill}} ll}

    {\bf Input:} 
    & potential ${\tt u} \in A_0$ \\ 
    & diffusivity $\ee > 0$  \\
    & number of iterations $\tt n_{it}$ \\

    \end{tabular*}
  \end{minipage}\end{figure} 

  \begin{figure}[H] \begin{minipage}{0.45\textwidth}
    \smaller
    \begin{tabular*}{\linewidth}
    {@{\extracolsep{\fill}} ll}

      {\bf Output:} \hspace{2mm} 
      belief ${\tt q} \in \Delta_0$ &

    \end{tabular*}
  \end{minipage}\end{figure} 
\end{multicols}

\vspace{-1.5em}
\begin{multicols}{2}

\begin{figure}[H] \smaller
  {\bf A.} ${\rm GBP}$ $\ee$-diffusion \\[0.5em]
\begin{minipage}{0.4\textwidth}
  \hrule

    \smaller
  \begin{algorithmic}[1] \label{alg-GBP}
    \setstretch{1.1}
    \Statex
    \For{$\tt i = 0 \dots n_{it}$} 
      \State {$\tt U_\aa \gets \zeta(u)_\aa$} 
      \State {$\tt U_\aa \gets U_\aa + \ln \Sigma \e^{-U_\aa}$} 
      \State {$\Phi_{\aa\bb} \tt\gets -\DF(U)_{\aa\bb}$} 
      \State {}
      \State {${\tt u_\aa \gets u_\aa } + \ee \cdot \delta(\Phi)_\aa$} 
    \EndFor
    \State {$\tt q_\aa \gets \e^{-U_\aa}$}\\
    \Return {${\tt q}$}
  \end{algorithmic}
\end{minipage}
\end{figure}

\begin{figure}[H] \smaller
  {\bf B.} Bethe $\ee$-diffusion\\[0.5em]
\begin{minipage}{0.4\textwidth}

  \hrule

    \smaller
  \begin{algorithmic}[1] \label{alg-Bethe}
    \setstretch{1.1}
    \Statex
    \For{$\tt i = 0 \dots n_{it}$} 
      \State {$\tt U_\aa \gets \zeta(u)_\aa$} 
      \State {}
      \State {$ \Phi_{\aa\bb} \tt \gets -\DF(U)_{\aa\bb}$} 
      \State {$ \phi_{\aa\bb} \gets {\tt c_\aa} \cdot \Phi_{\aa\bb}$} 
      \State {$\tt u_\aa \gets u_\aa + \ee \cdot \delta(\phi)_\aa$} 
    \EndFor
    \State {$\tt q_\aa \gets \e^{-U_\aa}$}\\
    \Return {$ {\tt q}$}
  \end{algorithmic}
\end{minipage}
\end{figure}
\end{multicols}
\vspace{-6mm}
\hr{0.4mm}
\end{figure}

\footnotetext{
  Note the normalisation operation 
  $U_\aa \gets U_\aa + \ln Z_\aa$ line 3 in A.
  It is replaced by line 4 in B, which takes care of 
  harmonising normalisation factors by eliminating
  redundancies in $\Phi$. 
  The arrows $U_{\aa} \gets \dots$ suggest $\tt map$ operations 
  that may be efficiently parallelised through asynchronous streams,
  by locality 
  of the associated operators $\zeta, \DF, \delta \dots$. 
  Each stream performs local operations over tensors in $A_\aa$, 
  whose dimensions depend on the 
  cardinality of local configuration spaces 
  $E_\aa = \prod_{i \in \aa} E_i$.
}

  Both algorithms consist of time-step $\ee$ 
  discrete Euler integrators of diffusion equations 
  of the form (\ref{eq-diffusion}), for two different flux functionals.
  Generalised belief propagation 
  (GBP) is usually expressed multiplicatively 
  for $\ee = 1$ in terms of beliefs
  $q_\aa = \frac 1 {Z_\aa} \e^{-U_\aa}$ and messages 
  $m_{\aa\bb} = \e^{- \ph_{\aa\bb}}$. 
  A choice of $\ee < 1$ would appear as an exponent 
  in the product of messages by this substitution.
  This is different from {\it damping} techniques \cite{Knoll-2017}
  and has not been previously considered to our knowledge. 
  
  Bethe numbers $c_\aa$ would also appear as exponents 
  of messages in the multiplicative formulation 
  of algorithm B. The combinatorial regularisation 
  offered by Bethe numbers stabilises divergent oscillations 
  in non-constant directions on hypergraphs, 
  improving convergence of GBP diffusion 
  at higher diffusivities. When $K$ is a graph, 
  the two algorithms are actually equivalent, so that 
  Bethe numbers only regularise normalisation 
  factors in the degree $\geq 2$ case. 

  \begin{figure}[h]
\begin{center}
\begin{minipage}{0.9\textwidth}
  \img{1}{TABLE.png}

  \smaller
  {\bf Fig 2.} 
  Convergence of GBP and Bethe diffusions 
  for different values of diffusivity $0 < \ee < 1$ and 
  energy scales on the 2-horn, depicted in figure 1. Both diffusions 
  almost surely diverge for diffusivities $\ee \geq 1$, 
  so that the usual GBP algorithm is not represented in this table. 
\end{minipage}
\end{center}
\end{figure}

  Figure 2 shows the results of experiments conducted 
  on the simplest hypergraph
  $K$ for which GBP does not surely converge to the unique 
  solution $q \in [u] \cap A_0^\Gamma$, the horn $\LL^2$ 
  depicted in figure 1. 
  Initial potentials $u \in A_0$ were normally sampled according 
  to $h_\aa(x_\aa) \sim \frac 1 T {\cal N}(0, 1)$ at 
  different temperatures or energy scales $T > 0$. 
  For each value of $T$ and for each fixed diffusivity 
  $\ee > 0$, GBP and Bethe diffusion 
  algorithms were run on random initial conditions
  for ${\tt n_{it}} = 10$ iterations. 
  Consistency of the returned beliefs, if any, was 
  assessed in the effective gradient $\Phi$ 
  to produce the represented decay ratios.
  Diffusivity was then increased until the drop in Bethe diffusion 
  convergence, occuring significantly later than GBP diffusion 
  but {\it before} $\ee < 1$, reflecting 
  the importance of using finer integrators than 
  usual $\ee = 1$ belief propagation algorithms. 

  The discretised diffusion 
  $(1 + \ee \delta \Phi)^n$ may be compared to 
  the approximate integration of
  $\exp(- n\ee x)$ as $(1 - \ee x)^n$, 
  which should only be done under the constraint $\ee |x| < 1$. 
  Assuming all eigenvalues of the linearised diffusion flow 
  $\delta \Phi_*$ are negative 
  (as is the case in the neighbourhood of a stable potential), 
  one should still ensure $\ee | \delta \Phi_* | < 1$ 
  to confidently estimate the large time asymptotics of diffusion 
  as $\exp(n \ee \delta \Phi) \simeq (1 + \ee \delta \Phi)^n$ 
  and reach $\Gamma$. 

  An open-sourced python implementation of the above algorithms, 
  with implementations of the (co)-chain complex 
  $A_\bullet(K)$ for arbitrary hypergraphs $K$, 
  Bethe numbers, Bethe entropy and 
  free energy functionals, and other operations for
  designing marginal estimation algorithms is on github at 
  \href{https://github.com/opeltre/topos}{opeltre/topos}.
  



\bibliographystyle{siam}
\bibliography{biblio}

\begin{thebibliography}{10}

\bibitem{Abramsky-2011}
{\sc S.~Abramsky and A.~Brandenburger}, {\em {The Sheaf-theoretic structure of
  non-locality and contextuality}}, New Journal of Physics, 13 (2011).

\bibitem{Bethe-35}
{\sc H.~A. Bethe and W.~L. Bragg}, {\em {Statistical Theory of Superlattices}},
  Proceedings of the Royal Society of London. Series A - Mathematical and
  Physical Sciences, 150 (1935), pp.~552--575.

\bibitem{Gallager-63}
{\sc R.~G. Gallager}, {\em {Low-Density Parity-Check Codes}}, MIT Press, 1963.

\bibitem{Jego-2009}
{\sc C.~Jego and W.~J. Gross}, {\em {Turbo Decoding of Product Codes Using
  Adaptive Belief Propagation}}, {IEEE Transactions on Communications}, 57
  (2009).

\bibitem{Kikuchi-51}
{\sc R.~Kikuchi}, {\em {A Theory of Cooperative Phenomena}}, Phys. Rev., 81
  (1951), pp.~988--1003.

\bibitem{Knoll-2017}
{\sc C.~Knoll and F.~Pernkopf}, {\em {On Loopy Belief Propagation -- Local
  Stability Analysis for Non-Vanishing Fields}}, in Uncertainty in Artificial
  Intelligence, 2017.

\bibitem{Leinster-08}
{\sc T.~Leinster}, {\em {The Euler Characteristic of a Category}}, Documenta
  Mathematica, 13 (2008), pp.~21--49.

\bibitem{Moerdijk}
{\sc I.~Moerdijk}, {\em {Classifying Spaces and Classifying Topoi}}, Springer,
  1995.

\bibitem{Morita-57}
{\sc T.~Morita}, {\em {Cluster Variation Method of Cooperative Phenomena and
  its Generalization I}}, Journal of the Physical Society of Japan, 12 (1957),
  pp.~753--755.

\bibitem{Mezard-Montanari}
{\sc M.~Mézard and A.~Montanari}, {\em {Information, Physics and
  Computation}}, Oxford University Press, 2009.

\bibitem{Pearl-82}
{\sc J.~Pearl}, {\em {Reverend Bayes on Inference Engines: A Distributed
  Hierachical Approach}}, in AAAI-82 Proceedings, 1982.

\bibitem{Pelizzola-2005}
{\sc A.~Pelizzola}, {\em {Cluster variation method in statisical physics and
  probabilistic graphical models}}, Journal of Physics A: Mathematical and
  General, 38 (2005).

\bibitem{gsi19}
{\sc O.~Peltre}, {\em {A Homological Approach to Belief Propagation and Bethe
  Approximations}}, in {Geometric Science of Information, 4th International
  Conference GSI 2019}, Springer, 2019.

\bibitem{phd}
\leavevmode\vrule height 2pt depth -1.6pt width 23pt, {\em {Message-Passing
  Algorithms and Homology}}.
\newblock PhD preprint, arXiv:2009.11631, 2020.

\bibitem{Ping-2017}
{\sc W.~Ping and A.~Ihler}, {\em {Belief Propagation in Conditional RBMs for
  Structured Prediction}}, in Proceedings of the 20th International Conference
  on Artificial Intelligence and Statistics, vol.~54, 2017, pp.~1141--1149.

\bibitem{Rota-64}
{\sc G.-C. Rota}, {\em {On the Foundations of Combinatorial Theory - I. Theory
  of Möbius Functions}}, Z. Warscheinlichkeitstheorie, 2 (1964), pp.~340--368.

\bibitem{Hinton-2009}
{\sc R.~Salakhutdinov and G.~Hinton}, {\em {Deep Boltzmann Machines}}, in
  Proceedings of the Twelth International Conference on Artificial Intelligence
  and Statistics, D.~van Dyk and M.~Welling, eds., vol.~5 of Proceedings of
  Machine Learning Research, 2009, pp.~448--455.

\bibitem{Sun-2003}
{\sc J.~Sun, N.-N. Zheng, and H.-Y. Shum}, {\em {Stereo Matching Using Belief
  Propagation}}, {IEEE Transactions on Pattern Analysis and Machine
  Intelligence}, 25 (2003).

\bibitem{SGA-4-V}
{\sc J.-L. Verdier and A.~Grothendieck}, {\em {V: Cohomologie dans les Topos}},
  SGA-4, 2 (1972).

\bibitem{Vorobev-62}
{\sc N.~Vorob'ev}, {\em {Consistent Families of Measures and their
  Extensions}}, Theory of Probability and its Applications, 7 (1962),
  pp.~147--164.

\bibitem{Yedidia-2005}
{\sc J.~Yedidia, W.~Freeman, and Y.~Weiss}, {\em {Constructing Free Energy
  Approximations and Generalized Belief Propagation Algorithms}}, IEEE
  Transactions on Information Theory, 51 (2005), pp.~2282--2312.

\end{thebibliography}

\end{document}